\numberwithin{equation}{section}
\newtheorem{theorem}{Theorem}
\newtheorem{lemma}{Lemma}
\newtheorem{proposition}{Proposition}
\numberwithin{theorem}{section}
\numberwithin{corollary}{section}
\numberwithin{lemma}{section}
\numberwithin{definition}{section}
\numberwithin{proposition}{section}
\numberwithin{remark}{section}
\newcommand{\rn}{\mathbb R^n}
\newcommand{\R}{\mathbb R}
\def\Sn{\mathcal{S}_n}
\newcommand{\medint}{-\kern  -,375cm\int}
\begin{document}
\title[\textsf{The classical overdetermined Serrin problem}]{The classical overdetermined Serrin problem }
\author[  C. Nitsch  - C. Trombetti]{  C. Nitsch$^*$   - C. Trombetti$^*$}
\thanks{%
$^*$ Dipartimento di Matematica e Applicazioni ``R. Caccioppoli'', Universit\`{a}
degli Studi di Napoli ``Federico II'', Complesso Monte S. Angelo, via Cintia
- 80126 Napoli, Italy; email: c.nitsch@unina.it;
cristina@unina.it}
\date{}
\subjclass{ 35N25, 35B50, 26D15}
\keywords{Overdetermined problems, moving planes, p-functions, maximum principle}

\begin{abstract}
In this survey we consider the classical overdetermined problem which was studied by Serrin in 1971. The original proof relies on Alexandrov's moving plane method, maximum principles, and a refinement of Hopf's boundary point Lemma. Since then other approaches to the same problem have been devised. Among them we consider the one due to Weinberger which strikes for the elementary arguments used and became very popular. Then we discuss also a duality approach involving harmonic functions, a shape derivative approach and a purely integral approach, all of them not relying on maximum principle. For each one we consider pros and cons as well as some generalizations.
\end{abstract}
\maketitle

\tableofcontents

\section{The early years}
In a celebrated paper \cite{Serrin} Serrin initiates the study of elliptic equations under overdetermined boundary conditions. He establishes in particular the radial symmetry of
the solution to the following overdetermined Poisson problem.
Let $\Omega$ be a {  bounded, } smooth, open, connected set of {  $\rn$} , and {  let} $\nu_x$ the outward  normal {  at $x \in \partial \Omega$}, if $u$ is a smooth solution to
\begin{equation}\label{delta}
\left\{
\begin{array}{ll}
\Delta u=-1 & \mathrm{in}\,\,\Omega \\
u=0 & \mathrm{on}\,\,\partial \Omega \\
\displaystyle\frac{\partial u}{\partial \nu_x}=const(=c) &  \mathrm{on}\,\,\partial \Omega,
\end{array}
\right. 
\end{equation}
then $u=\frac{R^2-|x|^2}{2n}$ up to a translation and therefore $\Omega$ is a ball with radius $R$.
The main tool of his proof is a technique introduced  by Alexandrov \cite{A1,A2} known as \emph {moving plane} (in a completely different context to established that {  the only compact, embedded $(n-1)$-dimensional smooth hypersurfaces in $\rn$ with constant mean curvature are the spheres}) combined with a clever refinement of the maximum principle (see Lemma \ref{maxpri}). Right after Serrin's paper, Weinberger \cite{Wein} came out with a very short proof of the same result using the maximum principle applied to an auxiliary function. However in spite of its simplicity Weinberger's proof on one hand seems to rely on the linearity of the Laplace operator and was not elementarily generalizable to nonlinear ones, on the other hand is restricted to constant righthand side in the Poisson problem. Serrin's proof has in fact the great advantage of being easily stretchable to a wide range of fully nonlinear elliptic operators with fairly general data.\\
Shortly after, these early papers resulted in a wide research field which nowadays it is still very prominent. It is also very important to mention that Serrin's approach inspired other { fundamental }results concerning symmetry in PDE's. Among the others a seminal paper by Gidas, Ni and Nirenberg \cite{GNN} which unfortunately is beyond the scope of the present survey. Our goal indeed is to summarize in a concise but self contained way both Serrin's and Weinberger's proofs along with some of the alternative results which came out more recently. Nonlinear problems, stability issues, possibility to extend the symmetry result in case of lack of regularity, overdetermined problems in exterior domains, different overdetermined boundary conditions, are only few of the interests which became popular during the last few decades. It is impossible to give an exhaustive list of all the results hence we will restrict our attention just on the original Poisson problem \eqref{delta}. We shall stress pros and cons of every approach and mention major applications to different settings.

As Serrin explains \cite{Serrin}, his work originated form physical motivations:
\begin{quote} \it Consider a viscous incompressible fluid moving in straight parallel streamlines
through a straight pipe of given cross sectional form $\Omega$. If we fix rectangular coordinates
in space with the $z$ axis directed along the pipe, it is well known that the
flow velocity $u$ is then a function of $x$, $y$ alone satisfying the Poisson differential
equation (for $n = 2$)
$$\Delta u=-A \quad\mbox{in $\Omega$}$$
where $A$ is a constant related to the viscosity and density of the fluid and to the rate
of change of pressure per unit length along the pipe. Supplementary to the differential
equation one has the adherence condition
$$u=0 \quad\mbox{on $\partial\Omega$}.$$
Finally, the tangential stress per unit area on the pipe wall is given by the quantity
$\mu\frac{\partial u}{\partial \nu_x}$ where $\mu$ is the viscosity. 
Our result states that the tangential stress on the pipe wall is the same at all points of the wall 
if and only if the pipe has a circular cross section.

Exactly the same differential equation and boundary condition arise in the
linear theory of torsion of a solid straight bar of cross section $\Omega$, (...) 
when a solid straight bar is subject to torsion, the magnitude
of the resulting traction which occurs at the surface of the bar is independent
of position if and only if the bar has a circular cross section.
\end{quote}

In order to understand why the boundary overdetermination is so interesting in physical context one has to notice that {   it may arise in optimal control theory}. Following for instance the analogy with the torsion problem, we can ask what is the shape of a prismatic bar that maximizes the torsional rigidity when the cross sectional area is assigned. This is the famous Saint-Venant problem and the answer is the provided by the bar of circular cross section \cite{PSZ}. A necessary condition that a smooth cross section $\Omega$ has to satisfy, for being  the bar a maximizer of the torsion, is stationarity among smooth domain variations. The Torsion becomes a so called shape functional and the problem is recast in the framework of the shape optimization via domain derivative \cite{HenPi}. {  As mentioned by Serrin the torsion problem consists in finding a function $u$ (called torsion function) which solves $$\Delta u=-A \quad\mbox{in $\Omega$}$$ and $$u=0 \quad\mbox{on $\partial\Omega$}.$$} The well known Hadamard formula for the torsional rigidity enforces the gradient of $u$ to be constant on the boundary of $\Omega$ (i.e. the bar has constant shear stress) and here comes the overdetermination. In view of Serrin's result we can state that, when optimizing the torsion of a prismatic bar with respect to area preserving smooth variations of the cross section, the circular shaft is the unique stationary point.

\subsection{Serrin's {  result} \cite{Serrin}}

{  Before stating the result we observe that every solution to \eqref{delta}  is positive in $\Omega$ and that divergence theorem together with the fact that $Du=c\nu$ on  $\partial \Omega$ give 

\[
c= -\dfrac{|\Omega|}{|\partial \Omega|}.
\]
}
The main Theorem reads as follows.
\begin{theorem}\label{S1}
Whenever $\Omega$ is a $C^2$ bounded domain (bounded open and connected) of
  $\R^n$ and $u\in C^2(\overline\Omega)$ is a solution to problem \eqref{delta} then, up to a translation, $u=\frac{R^2-|x|^2}{2n}$ and $\Omega$ is a ball
with radius $R$.
\end{theorem}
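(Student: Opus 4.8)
The plan is to reproduce Alexandrov's moving plane method, as in Serrin's original argument, combining the strong maximum principle with the refined boundary point Lemma~\ref{maxpri}. I first record the two preliminary facts noted above: every solution of \eqref{delta} is strictly positive in $\Omega$ (being superharmonic and vanishing on $\partial\Omega$), and $c=-|\Omega|/|\partial\Omega|<0$. Then I fix an arbitrary direction, say $e_1$; for $\lambda\in\R$ set $T_\lambda=\{x_1=\lambda\}$, write $x^\lambda$ for the reflection of $x$ across $T_\lambda$, and let $\Sigma_\lambda=\Omega\cap\{x_1>\lambda\}$ and $K_\lambda=\{x^\lambda:x\in\Sigma_\lambda\}$. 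Since $\Omega$ is bounded, $K_\lambda\subset\Omega$ for $\lambda$ close to $\sup_\Omega x_1$; as $\lambda$ decreases there is a first critical value $\lambda^*$ at which the reflected cap ceases to be internal, and at $\lambda^*$ either (a) $\overline{K_{\lambda^*}}$ is internally tangent to $\partial\Omega$ at some point $P\notin T_{\lambda^*}$, or (b) $T_{\lambda^*}$ is orthogonal to $\partial\Omega$ at some point $Q\in T_{\lambda^*}\cap\partial\Omega$.

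On the cap $\Sigma_{\lambda^*}$ I compare $u$ with its reflection $v(x):=u(x^{\lambda^*})$. Both solve $\Delta(\cdot)=-1$, so $w:=v-u$ is harmonic in $\Sigma_{\lambda^*}$, vanishes on $T_{\lambda^*}\cap\Omega$, and satisfies $w=v\ge0$ on $\partial\Omega\cap\overline{\Sigma_{\lambda^*}}$ since there $u=0$ while $v=u(x^{\lambda^*})\ge0$. The maximum principle gives $w\ge0$ in $\Sigma_{\lambda^*}$ and, unless $w\equiv0$, the strong maximum principle gives $w>0$ in the interior. In case (a), $w$ is a nonnegative harmonic function vanishing at the point $\bar P\in\partial\Omega\cap\partial\Sigma_{\lambda^*}$ obtained by reflecting $P$; Hopf's boundary point lemma would force $\partial w/\partial\nu(\bar P)<0$ unless $w\equiv0$, whereas the overdetermined condition yields $\partial w/\partial\nu(\bar P)=0$, because $\partial u/\partial\nu\equiv c$ on $\partial\Omega$ and the outward normals at $\bar P$ and at $P$ are mirror images of each other (which is exactly what internal tangency means), so $w\equiv0$. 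In case (b) the classical Hopf lemma is unavailable at the corner $Q$, and here Serrin's refinement, Lemma~\ref{maxpri}, is needed: the full gradient of $w$ vanishes at $Q$ (thanks to $w\equiv0$ on $T_{\lambda^*}$ and to $\nabla u=c\,\nu$ on $\partial\Omega$ together with the orthogonality of $T_{\lambda^*}$ to $\partial\Omega$), so a second-order analysis of $w$ at $Q$ produces a contradiction unless $w\equiv0$. In either case $w\equiv0$ on $\Sigma_{\lambda^*}$, hence $\Omega$ and $u$ are symmetric about $T_{\lambda^*}$.

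Since $e_1$ was arbitrary and the reflecting hyperplane in each direction passes through the centroid of $\Omega$, these hyperplanes all meet at one point, which we take as the origin, and the moving plane analysis (using the connectedness of $\Omega$) forces $\Omega$ to be a ball $B_R$ with $u$ radial. The radial profile solves $u''+\tfrac{n-1}{r}u'=-1$ with $u'(0)=0$ and $u(R)=0$, hence $u=\frac{R^2-|x|^2}{2n}$ after the translation moving the centroid to the origin, and $c=-R/n$, consistently with $c=-|\Omega|/|\partial\Omega|$.

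The step I expect to be the real obstacle is case (b): the failure of the ordinary Hopf boundary point lemma at a point where the moving plane meets $\partial\Omega$ orthogonally, which is precisely why the quantitative corner estimate of Lemma~\ref{maxpri} is indispensable; one must also use the $C^2$ regularity of $\partial\Omega$ to be sure that the procedure is well posed (the cap stays inside for $\lambda$ near $\sup_\Omega x_1$, that (a) and (b) exhaust the critical configurations, and that the Taylor expansions at $Q$ make sense). As a shorter alternative I would mention Weinberger's route: put $P=|\nabla u|^2+\tfrac{2}{n}u$; the Bochner formula and the Cauchy--Schwarz inequality $|\nabla^2 u|^2\ge\tfrac1n(\Delta u)^2$ give $\Delta P\ge0$, so $P$ attains its maximum on $\partial\Omega$, where $P\equiv c^2$; a Pohozaev-type integration by parts then forces $\int_\Omega P\,dx=c^2|\Omega|$, whence $P\equiv c^2$ in $\Omega$, so equality holds in Cauchy--Schwarz, i.e. $\nabla^2 u=-\tfrac1n I$, and integrating gives the quadratic profile and $\Omega=B_R$.
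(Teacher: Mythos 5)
Your route is the same as the paper's: moving planes, the harmonic difference $w$ on the critical cap, Hopf's lemma at an internal tangency, and Serrin's corner lemma when the critical plane meets $\partial\Omega$ orthogonally. Case (a), the reduction to the two critical configurations, and the concluding symmetry argument all match the paper's proof.

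The genuine gap is in case (b), and it sits exactly where you flag ``the real obstacle'' without resolving it. Lemma \ref{maxpri} produces a contradiction only if, for some direction $\theta$ entering the cap and non\-tangential to $\partial\Omega$ at $Q$, \emph{both} $\partial w/\partial\theta(Q)=0$ and $\partial^2 w/\partial\theta^2(Q)=0$. You sketch the first-order vanishing but merely assert that ``a second-order analysis of $w$ at $Q$ produces a contradiction''; the second-order vanishing is not automatic for a nonnegative harmonic function vanishing at a corner, and establishing it is the heart of Serrin's argument. Concretely: write $\partial\Omega$ near $Q$ as $x_n=\phi(x_1,\dots,x_{n-1})$ with $x_n$ along the inner normal and $x_1$ along the moving direction. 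Differentiating the Dirichlet condition twice gives $u_{ij}(Q)=-c\,\phi_{ij}(Q)$ for $i,j<n$; differentiating the Neumann condition tangentially gives $u_{ni}(Q)=0$ for $i<n$; the equation then determines $u_{nn}(Q)$. Since $v(x)=u(-x_1,x_2,\dots,x_n)$ in these coordinates, the Hessians of $u$ and $v$ at $Q$ can differ only in the mixed entries $u_{1j}(Q)$, $j=2,\dots,n$, which change sign under the reflection. The entry $u_{1n}(Q)$ vanishes by the Neumann computation, but for $j=2,\dots,n-1$ one needs $\phi_{1j}(Q)=0$, and this is where the geometry enters: the inclusion $K_{\lambda^*}\subseteq\Omega$ forces $\partial\phi/\partial x_1$ to have an extremum at $Q$ with respect to the variables $x_2,\dots,x_{n-1}$, whence $\phi_{1j}(Q)=0$ and therefore $u_{1j}(Q)=0$. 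Only after all of this do the first and second derivatives of $w$ vanish at $Q$ so that Lemma \ref{maxpri} yields the contradiction; without it the argument in case (b), and hence the proof, is incomplete. (Your closing sketch of Weinberger's alternative is essentially correct and coincides with the paper's second proof, but it is offered as an aside rather than carried out.)
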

 
We start by recalling Serrin's proof of Theorem \ref{S1}, which  relies on the moving planes method together with the strong maximum principle.
\begin{proof}
We denote by $H_\nu$  an open halfspace with unit outer normal $\nu$   and we move  this halfspace  along the direction $\nu$ until it intersects $\Omega$. 
We still denote by $H_\nu$ the halfspace after its motion and  by $\Omega_\nu= \Omega \cap H_\nu$.
For every cap $\Omega_\nu$ let  us denote by $\Sigma(\Omega_\nu)$  its reflection with respect to $\partial H_\nu$ and let us move $H_\nu$ until $\Sigma(\Omega_\nu)\subset \Omega$. When the motion stops then one of these two {  cases} occur:
\medskip
\begin{enumerate}
\item $\Sigma(\Omega_\nu)$  becomes internally tangent to $\partial \Omega$ at a point $\bar x$ not belonging to $\partial H_\nu$;
\\
\item $H_\nu$ reaches a position such that $\nu$ is tangent to $\partial \Omega$ at some point   $\bar y$ .
\end{enumerate}

We denote by $H'_\nu$ the halfspace when it reaches one of these positions and by $\Omega'_\nu$  the respective cap. 
The goal is to prove that $\Omega$ is symmetric with respect to the hyperplane $\partial H_\nu$. Once this fact is proved then the theorem follows, since   for every direction $\nu$, $\Omega$ would be symmetric with respect to the hyperplane normal to $\nu$. Moreover, by construction, $\Omega$ would also be simply connected,   then it has to be a ball and the unique solution to \eqref{delta} is the paraboloid.

Let now $\Sigma(\cdot)$ denote the reflection across $\partial H'_\nu$.
{  We set $v$ the function defined in $\Sigma(\Omega'_\nu)$ by
\[
v(x)= u(\Sigma(x)) \quad x \in \Sigma(\Omega'_\nu).
\]} 
Obviously $v$ satisfies:

\begin{equation}\label{reflection}
\left\{
\begin{array}{ll}
\Delta v=-1 & \mathrm{in}\,\,\Sigma(\Omega'_\nu) 
\\ \\
v=u & \mathrm{on}\,\,\partial \Sigma(\Omega'_\nu) \cap \partial H'_\nu
 \\ \\
v=0\quad  \mathrm{ and}\quad \displaystyle\frac{\partial v}{\partial \nu_x}=c &  \mathrm{on}\,\,\partial \Sigma(\Omega'_\nu)  \setminus \partial H'_\nu.
\end{array}
\right. 
\end{equation}

Since $\Sigma(\Omega'_\nu)$ is contained in $\Omega$ one can consider the function $u-v$ and (recalling that $u>0$ in $\Omega$) observe that it satisfies

\begin{equation}\label{difference}
\left\{
\begin{array}{ll}
\Delta (u-v)=0 & \mathrm{in}\,\,\Sigma(\Omega'_\nu) 
\\ \\
u-v =0 & \mathrm{on}\,\,\partial \Sigma(\Omega'_\nu) \cap \partial H'_\nu
 \\ \\
u-v \ge 0,   &  \mathrm{on}\,\,\partial \Sigma(\Omega'_\nu)  \setminus \partial H'_\nu.
\end{array}
\right. \notag
\end{equation}

At this point the strong maximum principle {  gives} either
\begin{equation}
\label{positive}
u-v>0 \quad \mathrm{in}\,\, \Sigma(\Omega'_\nu) 
\end{equation}
or $u \equiv v$ in $ \Sigma(\Omega'_\nu)$. The latter case would imply that $\Omega$ is symmetric about $\partial H'_\nu$.

Assume that {  case} (1) occurs, that is   $\Sigma(\Omega'_\nu)$  is internally tangent to $\partial \Omega$ at a point $\bar x$ not belonging to $\partial H_\nu$ and assume by contradiction that \eqref{positive} holds true. Then Hopf Lemma ensures that
\[
\displaystyle\frac{\partial }{\partial \nu_{\bar x}}(u-v) >0,
\]
but this contradicts the fact that \eqref{delta} and in \eqref{reflection} yield
\[
\displaystyle\frac{\partial u }{\partial \nu_{\bar x}}=\displaystyle\frac{\partial v}{\partial \nu_{\bar x}}  =c.
\]
We conclude that \eqref{positive} cannot occur in case (1).

Case (2) is much more { complicated }since Hopf Lemma cannot apply. The proof makes use of  a refinement of the maximum principle, see Lemma  \ref{maxpri} below (for its proof see \cite{Serrin}). 
The goal is to prove that $u-v$ has in $\bar y$ a second order zero. To do this we fix a coordinate system with the origin at $\bar y$, the $x_n$ axis in the direction of the inward normal to $\partial \Omega$ at $\bar y$ (that is $-\nu_{\bar y}$), and the $x_1$ axis in the direction of $\nu$, that is normal to $\partial H'_\nu$.
In this coordinates system the boundary of $\Omega$  is locally given by

\[
x_n= \phi(x_1,x_2, \cdots,x_{n-1}) \quad \phi \in C^2.
\]
Since $u \in C^2$ the boundary conditions, {  $u=0$ on $\partial \Omega$ and $\displaystyle\frac{\partial u }{\partial \nu_{x}}=c$ on $\partial \Omega$,} can be written as 
\begin{equation}
\label{bc1}
u(x_1,x_2, \cdots,x_{n-1},\phi)=0,
\end{equation}
and
\begin{equation}
\label{bc2}
\displaystyle\frac{\partial u }{\partial x_n} - \displaystyle\sum_{k=1}^{n-1} \displaystyle\frac{\partial u }{\partial x_k}\displaystyle\frac{\partial \phi }{\partial x_k}=
c \left \{  1+ \sum_{k=1}^{n-1} \left(\displaystyle\frac{\partial \phi }{\partial x_k}\right)^2\right \}^{1/2},
\end{equation}
respectively.

Differentiating \eqref{bc1} with respect to $x_i$, for {  $ i=1,\dots, n-1$,} we have
\begin{equation}
\label{diff1}
\displaystyle\frac{\partial u }{\partial x_i}+\displaystyle\frac{\partial u }{\partial x_n}\displaystyle\frac{\partial \phi }{\partial x_i}=0.
\end{equation}
Evaluating \eqref{diff1} and \eqref{bc2} at $\bar y$ and recalling that $\displaystyle\frac{\partial \phi }{\partial x_i}(\bar y) = 0$ for { $i=1, \dots, n-1$} we have
\begin{equation}
\label{gradzero}
\displaystyle\frac{\partial u }{\partial x_i}(\bar y)=0 \quad \displaystyle\frac{\partial u }{\partial x_n}(\bar y)=c.
\end{equation}
Differentiating \eqref{diff1}   with respect to $x_j$, we get for { $i,j=1, \dots, n-1$}

\begin{equation}
\label{diff2}
\displaystyle\frac{\partial^2 u }{\partial x_i\partial x_j}(\bar y) +c \displaystyle\frac{\partial^2 \phi }{\partial x_i\partial x_j}(\bar y)=0
\end{equation}
while differentiating \eqref{bc2} with respect to $x_i$, for $i=1, \cdots n-1$ and using \eqref{gradzero} we obtain 

\begin{equation}
\label{diffn}
\displaystyle\frac{\partial^2 u }{\partial x_n\partial x_i}(\bar y) =0.
\end{equation}
From \eqref{diff2} 

\begin{equation}
\label{diffnn}
\displaystyle\frac{\partial^2 u }{\partial x_n^2}(\bar y) = -\displaystyle\sum_{i=1}^{n-1} \displaystyle\frac{\partial^2 u }{\partial x_i^2}(\bar y) -1= c \Delta \phi(\bar y) -1.
\end{equation}
{ By construction} $\Sigma(\Omega'_\nu) \subseteq \Omega$ { and} all the second derivatives $\displaystyle\frac{\partial^2 \phi }{\partial x_1\partial x_j}(\bar y)=0$ for
$j=2, \cdots, n-1$, { because} $\displaystyle\frac{\partial \phi }{\partial x_1}$ has an extremum point at $\bar y$ with respect to all but the first coordinates directions.
Setting
\[
v(x_1,x_2,\cdots,x_n) = u(-x_1,x_2,\cdots,x_n),
\]
by \eqref{diff2}, \eqref{diffn} and the last remark we have that all the first and second derivatives of $u$ and $v$ coincide at $\bar y$.
The function $w= u-v$ satisfies
 { \[
\Delta w= 0 \quad {\rm in } \>\> \Sigma(\Omega'_\nu),
\]}
{ \[
w>0  \quad {\rm in } \>\> \Sigma(\Omega'_\nu),
\]}
and $w(\bar y)=0$. If $\theta $ is any direction not parallel to $\nu$ Lemma \ref{maxpri} ensures that {  either}
\[
\displaystyle\frac{\partial (u-v) }{\partial \theta}(\bar y)>0  \quad {\rm or } \>\> \displaystyle\frac{\partial^2 (u-v) }{\partial \theta^2}(\bar y)>0,
\]
which is a contradiction since all the first and second derivatives of $u$ and $v$ coincide at $\bar y$.
\end{proof}

\noindent

The following Lemma is a refinement of Hopf Lemma. We omit its proof which is contained in \cite{Serrin}
\begin{lemma}
\label{maxpri}
Let $\Omega$ be a $C^2$ bounded domain (bounded open and connected) of
  $\R^n$ and let $\nu$ a direction such that $<\nu,\nu_y>=0$, $y \in \partial \Omega$. Let $H_\nu$  be an open halfspace with unit outer normal $\nu$,  $\Omega_\nu= \Omega \cap H_\nu$ and let  $w \in C^2(\bar \Omega_\nu)$ satisfy
  $$
  \Delta w \le 0 \quad {\rm in} \>\> \Omega_\nu,
  $$ 
   $w\ge 0$ in $\Omega_\nu$ and $w(y)=0$. If $\theta$ is a direction in $y$ entering $\Omega_\nu$ such that $<\theta,\nu_y>\neq 0$, then either
  \[
  \frac{\partial w}{\partial \theta}(y) >0 \quad {\rm or} \> \>  \frac{\partial^2 w}{\partial \theta^2}(y) >0
    \]
    unless $w \equiv 0$.
\end{lemma}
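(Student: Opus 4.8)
The strategy is to prove the Lemma by a \emph{corner version of Hopf's boundary point lemma}: at the exceptional point $y$ one builds an explicit comparison function, essentially Hopf's radial barrier for an interior tangent ball weighted by the first coordinate, and applies the minimum principle to $w$ minus a small multiple of it. We may assume $w\not\equiv0$; then the strong maximum principle (working on the relevant connected component of $\Omega_\nu$ if it is disconnected) gives $w>0$ in the interior of $\Omega_\nu$. I fix coordinates with origin at $y$, the $x_1$-axis along $\nu$, so that $\Omega_\nu=\Omega\cap\{x_1<0\}$, and the $x_n$-axis along $\nu_y$; by $C^2$-regularity $\partial\Omega$ is locally $\{x_n=\phi(x')\}$ with $\phi(0)=0$, $\nabla\phi(0)=0$, $\Omega=\{x_n<\phi(x')\}$, and there is an interior ball $B_r(c)\subset\Omega$ with $c=-re_n$ and $y\in\partial B_r(c)$. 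Two elementary remarks, both using the $C^2$-regularity: the hypotheses that $\theta$ enters $\Omega_\nu$ and $\langle\theta,\nu_y\rangle\neq0$ force $\theta_1<0$ and $\theta_n<0$ (the ray $y+t\theta$ must lie in $\{x_1<0\}$ and satisfy $t\theta_n<\phi(t\theta')=O(t^2)$), so that $\theta_1\theta_n>0$ — and this positivity is exactly what will make the decisive second-order term come out with the right sign.

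Write $\rho=|x-c|$ and set, for a large parameter $\alpha$,
\[
\zeta(x)=-x_1\bigl(e^{-\alpha\rho^2}-e^{-\alpha r^2}\bigr),
\qquad
D=\bigl\{\tfrac{r}{2}<\rho<r\bigr\}\cap\{x_1<0\},
\]
which is a connected subset of $\Omega_\nu$ with $y\in\partial D$ (indeed $\rho(y)=r$, $y_1=0$) and into which $\theta$ enters. A direct computation gives
\[
\Delta\zeta=2\alpha\,x_1\,e^{-\alpha\rho^2}\bigl(n+2-2\alpha\rho^2\bigr),
\]
so, since $x_1<0$ and $\rho\geq r/2$ on $D$, the choice $\alpha\geq2(n+2)/r^2$ makes $\zeta$ a subsolution, $\Delta\zeta\geq0$, throughout $D$. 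By construction $\zeta$ vanishes on the two faces of $\partial D$ lying on $\{x_1=0\}$ and on $\partial B_r(c)$, and it is bounded and $\geq0$ on the remaining face $\{\rho=r/2\}\cap\{x_1\le0\}$. Finally $\zeta(y)=0$, $\nabla\zeta(y)=0$, and the Hessian $D^2\zeta(y)$ has only the $(1,n)$ and $(n,1)$ entries nonzero, both equal to $\mu:=2\alpha r\,e^{-\alpha r^2}>0$; hence
\[
\partial_\theta\zeta(y)=0,\qquad \partial^2_{\theta\theta}\zeta(y)=2\mu\,\theta_1\theta_n>0 .
\]

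Now $\Delta(w-\eps\zeta)\le0$ in $D$, and for $\eps>0$ small one has $w-\eps\zeta\geq0$ on $\partial D$: on the two faces where $\zeta=0$ this is just $w\geq0$; on the face $\{\rho=r/2\}\cap\{x_1\le0\}$ one uses $w>0$ at its interior points together with the fact that, by the ordinary Hopf lemma at the smooth boundary points of $\Omega_\nu$ lying on $\{x_1=0\}$, $w$ can vanish there only to first order in $x_1$ — exactly as $\zeta$ does — so that $\zeta/w$ stays bounded there. By the minimum principle $w\geq\eps\zeta$ on $\overline D$, with equality at $y$. It remains to expand the nonnegative function $t\mapsto w(y+t\theta)-\eps\zeta(y+t\theta)$, which vanishes at $t=0$ and is defined for small $t\geq0$ because $\theta$ enters $D$: its first right derivative at $0$ is $\partial_\theta w(y)-\eps\,\partial_\theta\zeta(y)=\partial_\theta w(y)\geq0$, and if this equals $0$ then the second right derivative at $0$ is $\geq0$, so that
\[
\partial^2_{\theta\theta}w(y)\ \geq\ \eps\,\partial^2_{\theta\theta}\zeta(y)\ =\ 2\eps\mu\,\theta_1\theta_n\ >\ 0 .
\]
In either case one of the two alternatives holds, which proves the Lemma.

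The only genuinely delicate point — and the reason the ordinary Hopf lemma does not apply — is the construction of the barrier $\zeta$ at the corner: since $y$ lies on both $\partial\Omega$ and $\partial H_\nu$, no interior ball touches $\partial\Omega_\nu$ only at $y$, so one must work in a comparison domain two of whose faces are a hyperplane and a piece of the interior tangent ball, and calibrate the exponential weight $\alpha$ so that $\zeta$ is simultaneously a subsolution there, vanishes on those two faces, and has a strictly positive second directional derivative at the vertex for every admissible $\theta$. Once this function is in hand, checking the sign of $\Delta\zeta$ and the boundary inequality $w\geq\eps\zeta$ on $\partial D$ are the only computational steps, and both are routine.
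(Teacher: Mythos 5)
Note first that the paper itself does not prove this lemma: it explicitly omits the proof and defers to Serrin's original article, so there is no internal argument to compare against. Your strategy -- the Hopf radial barrier weighted by the coordinate $-x_1$ normal to the reflecting hyperplane, used in the half-annulus $D=\{r/2<\rho<r\}\cap\{x_1<0\}$ -- is exactly the classical mechanism behind Serrin's corner lemma, and the computational parts are correct: the formula for $\Delta\zeta$ and the choice of $\alpha$, the fact that $\nabla\zeta(y)=0$ while $D^2\zeta(y)$ has only the $(1,n)$ and $(n,1)$ entries nonzero, the deduction $\theta_1<0$, $\theta_n<0$ (hence $\partial^2_{\theta\theta}\zeta(y)>0$) from the hypotheses on $\theta$, and the final one-sided Taylor expansion of $w-\eps\zeta$ along $y+t\theta$.

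There is, however, one step whose justification as written is not a proof, and it is precisely the step that makes the corner lemma harder than the ordinary Hopf lemma: the inequality $w\ge\eps\zeta$ on the inner face $F=\{\rho=r/2\}\cap\{x_1\le0\}$. This face is compactly contained in $\Omega$ but not in $\Omega_\nu$; it meets $\partial H_\nu$ along the rim $\{\rho=r/2\}\cap\{x_1=0\}$, where $w$ may vanish while $\zeta$ is positive at nearby points of $F$. Your appeal to ``the ordinary Hopf lemma at the smooth boundary points of $\Omega_\nu$ lying on $\{x_1=0\}$'' does not deliver what is needed, for three reasons: Hopf's lemma applies only at rim points where $w$ actually vanishes (it need not vanish anywhere); it produces a one-sided normal derivative at a single point, i.e. $w(z-te_1)\ge c_z t+o_z(t)$ with constants depending on $z$, not a uniform linear lower bound; and the points of $F$ with small $|x_1|$ do not lie on the normal rays issuing from rim points, so the pointwise information must be propagated. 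What is actually required is the uniform estimate $w(x)\ge\kappa(-x_1)$ on a neighborhood of the rim in $\{x_1\le0\}$, which is true but needs its own comparison argument: around each rim point $z_0$ take a half-ball $B_\tau(z_0)\cap\{x_1<0\}\subset\Omega_\nu$, minorize $w$ there by $m\psi$, where $\psi$ is harmonic in the half-ball, vanishes on the flat part and on the portion of the spherical boundary with $x_1>-\tau/4$, equals $1$ on the remaining portion (a compact subset of $\Omega_\nu$ on which $w\ge m>0$ by the strong maximum principle); the fixed function $\psi$ satisfies $\psi\ge c\,(-x_1)$ on the concentric half-ball of radius $\tau/2$ by Hopf's lemma applied to $\psi$ itself together with its regularity up to the flat boundary, and a finite cover of the compact rim yields a single $\kappa$. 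With this inserted (and the trivial bound on the part of $F$ bounded away from the rim, where $w$ has a positive minimum), your proof is complete; but you should not describe this boundary inequality as ``routine'' -- it is the crux of the lemma.
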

\subsubsection{Remark on the proof and generalization.}
The great advantage of Serrin's proof with respect to all other techniques that we are going to analyze is that it works out of the box on a massive number of other problems.
The main ingredients used are:
\begin{itemize}
\item The problem is invariant under reflection
\item In any boundary point of $\Omega$, in a framework where one of the axis points into the normal direction, the second derivative of $u$ can be determined in terms of the other second order derivative.
\item Maximum principle and boundary point maximum principle hold.
\end{itemize} 
If for instance we consider 
\begin{equation*}
\left\{
\begin{array}{ll}
\Delta u=f(u,|Du|) & \mathrm{in}\,\,\Omega \\
u=0 & \mathrm{on}\,\,\partial \Omega \\
\displaystyle\frac{\partial u}{\partial \nu_x}=const(=c) &  \mathrm{on}\,\,\partial \Omega.
\end{array}
\right. 
\end{equation*}
then $\Omega$ is a ball and $u$ is radially symmetric, provided $f$ is differentiable and $u>0$. 
The condition $u>0$ {  is unavoidable in order to apply the moving plane.} %can be removed if we assume $f<0$ {  but it is otherwise  necessary in order }to use the moving plane. 
The eigenvalue problem serves as a counterexample. 
No symmetry of solutions can be established via moving planes for
\begin{equation}\label{Schiffer}
\left\{
\begin{array}{ll}
\Delta u=-\lambda u & \mathrm{in}\,\,\Omega \\
u=0 & \mathrm{on}\,\,\partial \Omega \\
\displaystyle\frac{\partial u}{\partial \nu_x}=const(=c) &  \mathrm{on}\,\,\partial \Omega.
\end{array}
\right. 
\end{equation}
unless we know that we are dealing with the first eigenvalue $\lambda$ {  where $u$ has constant sign}.
The radial symmetry of solutions to the overdetermined eigenvalue problem \eqref{Schiffer} is known as Schiffer conjecture.

%and has been recently settled in 3 dimensions by A.G.Ramm \cite{Ramm}.

Serrin moving plane technique can be generalized also to many nonlinear elliptic operators (such as $p$-Laplacian), but the effectiveness of the proof depends upon the fine structure of the equation, and it is not possible to give an exhaustive list of the nonlinearity covered. 

Finally we notice that it is possible to consider also different boundary conditions. For instance replace the constant $c$ in \eqref{delta} with a smooth monotone non decreasing function of the mean curvature of $\partial\Omega$.

%In general one can easily construct a nonlinear uniformly elliptic operator $F(D^2u,|Du|,u)$ such that the Serrin's result hold when considering
%\begin{equation*}
%\left\{
%\begin{array}{ll}
%F(D^2u,|Du|,u)=0 & \mathrm{in}\,\,\Omega \\
%u=0 & \mathrm{on}\,\,\partial \Omega \\
%\displaystyle\frac{\partial u}{\partial \nu_x}=const(=c) &  \mathrm{on}\,\,\partial \Omega.
%\end{array}
%\right. 
%\end{equation*}
%Or for instance it is possible to replace the constant $c$ in \eqref{delta} with a smooth monotone non decreasing function of the mean curvature of $\partial\Omega$.

\subsection{Weinberger's proof \cite{Wein}}\label{subswein}

The proof makes use of an integral identity (Poho\v{z}aev identity), and of the strong maximum principle applied to an auxiliary function called $P$-function.

We recall the Poho\v{z}aev identity 
\begin{proposition}
Let  $g\in C^1(\R)$ be a nonnegative function and let $G(u)=\int_u^0 g(s)ds$. If $u \in C^2(\Omega)\cap C^1(\overline{\Omega})$ is a solution to the problem
$$\left\{\begin{array}{ll}
\Delta u=g(u)&\mathrm{in } \>\Omega\\
u=0&\mathrm{on }\> \partial\Omega
\end{array}
\right.$$
in a smooth domain $\Omega$ of $\rn$, 
 then
\begin{equation} \label{pohozaev}
\frac{n-2}{2}\int_\Omega |Du|^2 \>dx +\frac{1}{2}\oint_{\partial\Omega}<x,\nu_x>|Du|^2= n \int_\Omega G(u)\>dx.
\end{equation}
\end{proposition}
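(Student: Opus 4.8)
\emph{Proof sketch.} The plan is to test the equation against the Rellich--Poho\v{z}aev multiplier $\langle x, Du\rangle$, i.e.\ to compute
\[
I:=\int_\Omega \Delta u \,\langle x, Du\rangle \, dx
\]
in two different ways and equate the results. For the right-hand side, note that $G'(u)=-g(u)$, so $g(u)\,\langle x, Du\rangle=-\langle x, D(G(u))\rangle=-\diver\!\big(G(u)\,x\big)+n\,G(u)$; since $u=0$ on $\partial\Omega$ and $G(0)=0$, the divergence theorem kills the boundary contribution and gives $I=n\int_\Omega G(u)\,dx$.

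For the left-hand side I would integrate by parts twice. A first integration yields
\[
I=\oint_{\partial\Omega}\frac{\partial u}{\partial\nu_x}\,\langle x, Du\rangle \, d\h-\int_\Omega \big\langle Du,\, D(\langle x, Du\rangle)\big\rangle \, dx .
\]
Expanding $D(\langle x, Du\rangle)=Du+(D^2u)\,x$ and using the identity $\langle Du,(D^2u)\,x\rangle=\tfrac12\langle x, D(|Du|^2)\rangle$, the interior integrand becomes $|Du|^2+\tfrac12\langle x, D(|Du|^2)\rangle$. Applying the divergence theorem once more to the last term (which produces $\tfrac12\oint_{\partial\Omega}\langle x,\nu_x\rangle|Du|^2\,d\h-\tfrac n2\int_\Omega|Du|^2\,dx$), one is left with
\[
I=\oint_{\partial\Omega}\frac{\partial u}{\partial\nu_x}\,\langle x, Du\rangle \, d\h-\frac12\oint_{\partial\Omega}\langle x,\nu_x\rangle|Du|^2\,d\h+\frac{n-2}{2}\int_\Omega|Du|^2\,dx .
\]
To finish, I would simplify the two boundary integrals: because $u\equiv 0$ on $\partial\Omega$, the gradient is purely normal there, $Du=\frac{\partial u}{\partial\nu_x}\nu_x$, hence $\langle x, Du\rangle=\frac{\partial u}{\partial\nu_x}\langle x,\nu_x\rangle$ and $|Du|^2=\big(\tfrac{\partial u}{\partial\nu_x}\big)^2$ on $\partial\Omega$. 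This collapses the first boundary integral into $\oint_{\partial\Omega}\langle x,\nu_x\rangle|Du|^2\,d\h$, so that the boundary contribution in $I$ reduces to $\tfrac12\oint_{\partial\Omega}\langle x,\nu_x\rangle|Du|^2\,d\h$. Equating the two expressions for $I$ gives exactly \eqref{pohozaev}.

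The main obstacle is not the algebra but the regularity assumed: $u$ is only $C^2(\Omega)\cap C^1(\overline\Omega)$, so the integrations by parts involving $D^2u$ are not immediately justified up to $\partial\Omega$. I would handle this by first carrying out the computation on a smooth exhaustion $\Omega_j\Subset\Omega$ with $\Omega_j\uparrow\Omega$, where every step is classical, and then pass to the limit: the interior integrals converge since $|Du|^2,G(u)\in L^1(\Omega)$ (by dominated convergence, using that $g\in C^1$ and $u\in C^1(\overline\Omega)$), and the boundary integrals converge because $Du$ extends continuously to $\overline\Omega$. Equivalently, one may prove \eqref{pohozaev} first under the stronger hypothesis $u\in C^2(\overline\Omega)$ and then remove it by approximation; either way the passage to the limit is the only delicate point. \qed
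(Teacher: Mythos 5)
Your argument is correct and is exactly the classical Rellich--Poho\v{z}aev multiplier computation: the paper does not reproduce a proof at all but simply defers to \cite{Struwe,Poh65}, where this same pairing of the equation with $\langle x,Du\rangle$, the double integration by parts, and the collapse of the boundary terms via $Du=\frac{\partial u}{\partial\nu_x}\nu_x$ on $\{u=0\}$ is carried out. Your closing remark correctly identifies the only delicate point under the stated regularity $u\in C^2(\Omega)\cap C^1(\overline\Omega)$, and the exhaustion argument you sketch is the standard way to handle it.
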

\begin{proof}
For the proof see for instance \cite{Struwe}, \cite{Poh65}.
\end{proof}

\begin{proof}[Proof of Theorem  \ref{S1}]

Let $u$ be a solution to \eqref{delta}, by \eqref{pohozaev}  we have 
 \[
 \frac{n-2}{2}\int_\Omega |Du|^2 \>dx + \frac{c^2}{2} \oint_{\partial \Omega} <x, \nu_x> = n \int_\Omega u \>dx.
 \]
 Equation in \eqref{delta} and the divergence theorem give:
\begin{equation}
\label{uguale}
\int_\Omega |Du|^2 \> dx=\int_\Omega u \>dx \quad { \rm and}\quad \oint_{\partial \Omega} <x, \nu_x>= n |\Omega|.
\end{equation}
{ Therefore we get}
\begin{equation}
\label{mean}
(n+2) \int_\Omega u \> dx = n c^2 |\Omega|.
\end{equation}
The classical Schwarz's inequality and equation in \eqref{delta} give 
\begin{equation}
\label{schwarz}
1= (\Delta u)^2 \leq n \sum_{i=1}^{n} \left(\frac{\partial^2 u}{\partial x_i^2}\right)^2 \leq n  \sum_{i,j=1}^{n} \left(\frac{\partial^2 u}{\partial x_i \partial x_j}\right)^2,
\end{equation}
so the function $P= |Du|^2 + \frac{2}{n}u$ satisfies

\begin{equation}
\label{pfunction}
\Delta \left(|Du|^2 + \frac{2}{n}u\right) = 2 \sum_{i,j=1}^{n} \left(\frac{\partial^2 u}{\partial x_i \partial x_j}\right)^2 - \frac{2}{n} \ge 0.
\end{equation}
From the strong maximum principle, since $|Du|^2 + \frac{2}{n}u = c^2$ on $\partial \Omega$, we conclude that either

\[
|Du|^2 + \frac{2}{n}u < c^2 \quad  {\rm in} \quad \Omega
\]
or

\[
|Du|^2 + \frac{2}{n}u \equiv c^2 \quad  {\rm in} \quad \Omega.
\]
{  In the first case by  \eqref{uguale} we have}
\[
\frac{n+2}{n} \int_\Omega u \> dx < c^2 |\Omega|,
\]
which contradicts  \eqref{mean}. { Therefore $P$ is constant in $\Omega$. This implies equality in both \eqref{pfunction} and \eqref{schwarz}, 
and we deduce that
\[
\frac{\partial^2 u}{\partial x_i \partial x_j}= \frac{\delta_{ij}}{n}.
\]
Consequently 
\[
u=\frac{R^2-|x|^2}{2n}
\]
up to translations and $\Omega$ is a ball of radius $R$.}
\end{proof}
\subsubsection{Remark on the proof and generalization{ s} .}
Weinberger's proof is particularly attractive for its elementary arguments. With respect to Serrin's proof it requires less regularity. Indeed already only interior maximum principle for the auxiliary P-function and the Poho\v{z}aev identity are needed. For this reason $u\in C^2(\Omega)\cap C^1(\bar\Omega)$ is enough. This also means that Weinberger broadens the class of domain among which the symmetry result can be established. Moreover Garofalo and Lewis showed in \cite{GL} that is is possible to assemble via $P$-function a Weinberger argument also for $p$-Laplacian type operators and recast the problem in the Sobolev $W^{1,p}$ settings. This paper opened new perspectives on a technique which for many years have been prescribed to the linear case. Operator in divergence form of $p$-Laplacian type have been later considered for instance in \cite{FaKa,FGK}, and even the special case of the $\infty$-Laplacian has been handled in \cite{BuKaw,CraFra}.

\section{More recent alternative proofs }
\subsection{The duality Theorem \cite{PaSc}}
The duality Theorem shows a deep connection between Serrin's overdetermined problem and the mean value theorem for harmonic functions. It is well known that the average of an harmonic function in a ball always equals the average on its boundary. Serrin's result established that the mean value theorem can be true only on balls in the sense that, if the average on a smooth, {  bounded} domain $\Omega$ equals the one on $\partial \Omega$ regardless the harmonic function we consider, then $\Omega$ must be a ball. In what follows $\Omega$ is a smooth domain.
\begin{theorem}
\label{D1}
{  Let $u \in C^2(\Omega) \cap C^1(\bar \Omega)$ be the solution to $-\Delta u=1$ in $\Omega$, and $u=0$ on $\partial \Omega$. The following statements are equivalent:}
\begin{enumerate}[(i)]
\item  { $u$ is a solution to \eqref{delta}.}
\item ${  \dfrac1{|\Omega|}\displaystyle\int_\Omega h \> dx = \dfrac1{|\partial\Omega|} \displaystyle\oint_{\partial \Omega} h } \quad{ \rm for \>all \> functions} \>h \in {  C^{0}({\bar \Omega})} \>{ \rm harmonic \>in\>} \Omega.$
\end{enumerate}
\end{theorem}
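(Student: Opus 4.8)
The plan is to observe that both conditions in Theorem~\ref{D1} are, at bottom, statements about the boundary values of $\partial u/\partial \nu_x$, linked to each other through Green's second identity, and to move between a pointwise identity on $\partial\Omega$ and an integral identity tested against harmonic functions by invoking solvability of the Dirichlet problem on the smooth domain $\Omega$.

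First I would record the identity that underlies both implications. Let $h$ be harmonic in $\Omega$ and continuous on $\bar\Omega$. Applying Green's second identity to the pair $(u,h)$ — after exhausting $\Omega$ by smooth subdomains, since $h$ is smooth in the interior but only assumed continuous up to $\partial\Omega$ while $u\in C^1(\bar\Omega)$ — and using $\Delta h=0$, $\Delta u=-1$ and $u=0$ on $\partial\Omega$, one obtains
\[
\int_\Omega h\,dx \;=\; -\oint_{\partial\Omega} h\,\frac{\partial u}{\partial\nu_x},
\]
valid for every such $h$, with no assumption on $\partial u/\partial\nu_x$. Recall also that for any solution of $-\Delta u=1$ in $\Omega$, $u=0$ on $\partial\Omega$, the divergence theorem gives $\oint_{\partial\Omega}\partial u/\partial\nu_x=-|\Omega|$, so the only candidate constant value of $\partial u/\partial\nu_x$ is $-|\Omega|/|\partial\Omega|$.

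For (i)$\Rightarrow$(ii): if $u$ solves \eqref{delta} then $\partial u/\partial\nu_x\equiv c=-|\Omega|/|\partial\Omega|$ on $\partial\Omega$, and substituting into the displayed identity yields $\int_\Omega h\,dx=\frac{|\Omega|}{|\partial\Omega|}\oint_{\partial\Omega}h$, which is exactly (ii). For (ii)$\Rightarrow$(i): subtracting (ii) from the displayed identity gives
\[
\oint_{\partial\Omega} h\left(\frac{\partial u}{\partial\nu_x}+\frac{|\Omega|}{|\partial\Omega|}\right)=0
\]
for every $h$ harmonic in $\Omega$ and continuous on $\bar\Omega$. Since $\Omega$ is smooth the Dirichlet problem is solvable for every continuous boundary datum, so the traces on $\partial\Omega$ of such functions $h$ run over all of $C^0(\partial\Omega)$, a set dense in $L^1(\partial\Omega)$; as $\partial u/\partial\nu_x$ is continuous on $\partial\Omega$ (because $u\in C^1(\bar\Omega)$), the fundamental lemma of the calculus of variations forces $\partial u/\partial\nu_x\equiv-|\Omega|/|\partial\Omega|$ on $\partial\Omega$, i.e.\ $u$ solves \eqref{delta}. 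In either case Theorem~\ref{S1} then identifies $\Omega$ as a ball.

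The only delicate point is the justification of Green's identity in the first step when $h$ is merely continuous up to the boundary: on a $C^2$ domain this is routine — one applies the identity on the inner domains $\{x\in\Omega:\mathrm{dist}(x,\partial\Omega)>\delta\}$ and lets $\delta\to 0$, using the uniform convergence of $h$ and the $C^1(\bar\Omega)$ bound on $u$ to pass to the limit — but it is the place where the regularity hypotheses on $u$ and the smoothness of $\Omega$ are genuinely used. Everything else is a formal consequence of Green's identity together with solvability of the Dirichlet problem on smooth domains.
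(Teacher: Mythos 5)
Your proposal is correct and follows essentially the same route as the paper: Green's identity (the paper phrases it via the divergence theorem) gives $\int_\Omega h = -\oint_{\partial\Omega} h\,\partial u/\partial\nu_x$, and the converse direction hinges on solvability of the Dirichlet problem on $\partial\Omega$. The only cosmetic difference is that where you invoke density of harmonic traces in $C^0(\partial\Omega)$ plus the fundamental lemma, the paper simply takes the single harmonic $h$ with boundary datum $\partial u/\partial\nu_x + |\Omega|/|\partial\Omega|$, so that the boundary integral becomes the integral of a square and must vanish pointwise.
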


\begin{proof}
Assume (i). Divergence theorem immediately implies that for every $h$ harmonic in $\Omega$
\[
\int_\Omega h \>dx = \int_\Omega (-\Delta u) h \> dx = \displaystyle\oint_{\partial \Omega} - \frac{\partial u}{\partial \nu} \,h = { \dfrac{|\Omega|}{|\partial \Omega|} }\displaystyle\oint_{\partial \Omega}  \,h
\]
and then (ii).
Conversely assume (ii) and let { $u \in C^2(\Omega) \cap C^1(\bar \Omega)$ }be such that $-\Delta u=1$ in $\Omega$, $u=0$ on $\partial \Omega$. Then
\[
0=\int_\Omega h \>dx -{ \dfrac{|\Omega|}{|\partial \Omega|} } \displaystyle\oint_{\partial \Omega} h \> dx= { \displaystyle\oint_{\partial \Omega} - (\frac{\partial u}{\partial \nu} +  \dfrac{|\Omega|}{|\partial \Omega|} )h}  
\]
which implies $\displaystyle\frac{\partial u}{\partial \nu}= -\dfrac{|\Omega|}{|\partial \Omega|} $  choosing $ { \displaystyle h=\frac{\partial u}{\partial \nu} +  \dfrac{|\Omega|}{|\partial \Omega|}}$ on $\partial\Omega$.

\end{proof}

\begin{theorem}
\label{D2}
If (ii)  holds true then $\Omega$ is a ball.
\end{theorem}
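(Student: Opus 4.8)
\textbf{Proof proposal for Theorem \ref{D2}.}

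The plan is to reduce the mean-value characterization (ii) to the Serrin overdetermined system \eqref{delta} and then invoke Theorem \ref{S1}. By Theorem \ref{D1}, condition (ii) is equivalent to (i), i.e. to the statement that the torsion function $u$ (the solution of $-\Delta u = 1$ in $\Omega$, $u=0$ on $\partial\Omega$) satisfies in addition $\partial u/\partial\nu = -|\Omega|/|\partial\Omega|$ (a constant) on $\partial\Omega$. Hence $u$ solves \eqref{delta} with $c = -|\Omega|/|\partial\Omega|$, and Theorem \ref{S1} immediately yields that $\Omega$ is a ball (and $u$ is the paraboloid). So at the level of logical structure the theorem is a one-line corollary of what has already been established.

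The only subtlety worth spelling out is the regularity bookkeeping: Theorem \ref{D1} is stated for $u \in C^2(\Omega)\cap C^1(\bar\Omega)$, whereas the version of Serrin's theorem proved via moving planes (Theorem \ref{S1}) was stated for $u\in C^2(\bar\Omega)$ on a $C^2$ domain. I would therefore first note that since $\Omega$ is assumed smooth, elliptic regularity up to the boundary applied to $-\Delta u = 1$, $u=0$ on $\partial\Omega$, promotes the torsion function to $u\in C^2(\bar\Omega)$ (indeed $u\in C^\infty(\bar\Omega)$), so that Theorem \ref{S1} applies verbatim. Alternatively, one may appeal to the Weinberger argument in Subsection \ref{subswein}, which needs only $u\in C^2(\Omega)\cap C^1(\bar\Omega)$ and hence matches the hypotheses of Theorem \ref{D1} without any lifting of regularity; either route closes the gap.

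Thus the proof reads: assume (ii); by Theorem \ref{D1} the torsion function $u$ satisfies $\partial u/\partial\nu = -|\Omega|/|\partial\Omega|$ on $\partial\Omega$, so $u$ is a solution of the overdetermined problem \eqref{delta}; by Theorem \ref{S1} (after the regularity remark above, or directly by the Weinberger proof) $\Omega$ is a ball. There is no real obstacle here beyond the regularity matching just described; the mathematical content is entirely contained in the duality of Theorem \ref{D1} together with the symmetry result of Theorem \ref{S1}, and the point of stating Theorem \ref{D2} separately is simply to record the striking consequence that the mean value property over $\Omega$ forces $\Omega$ to be a ball.
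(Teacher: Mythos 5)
Your argument is logically correct: Theorem \ref{D1} converts (ii) into (i), so the torsion function solves \eqref{delta}, and Theorem \ref{S1} then gives that $\Omega$ is a ball; your remark on lifting the regularity of $u$ to $C^2(\bar\Omega)$ by elliptic theory on a smooth domain (or falling back on Weinberger's argument, which needs only $C^2(\Omega)\cap C^1(\bar\Omega)$) properly closes the only gap. However, this is a genuinely different route from the paper's, and the difference is the whole point of the section. The paper's proof of Theorem \ref{D2} is self-contained and deliberately does \emph{not} invoke Theorem \ref{S1}, whose proof used the moving plane method: it applies (ii) to the particular harmonic function $h=\langle x,Du\rangle-2u$ to obtain the identity $nc^2|\Omega|-(n+2)\int_\Omega u\,dx=0$ (the same identity that Weinberger extracts from the Poho\v{z}aev identity), then integrates by parts to show that $\int_\Omega u\,\Delta P\,dx=0$ for $P=|Du|^2+\tfrac{2}{n}u$; since $u>0$ in $\Omega$ and $\Delta P\ge 0$ pointwise by \eqref{pfunction}, this forces $\Delta P\equiv 0$, hence equality in \eqref{schwarz}, hence $D^2u$ is a constant multiple of the identity and $\Omega$ is a ball. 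Your reduction buys brevity and logical economy, but it makes the duality approach parasitic on the moving-plane proof rather than an independent alternative, which is what the paper is exhibiting here. If you want the statement to carry its intended weight, replace the appeal to Theorem \ref{S1} by the $P$-function computation just described (or at least by a full run of Weinberger's argument, which you currently mention only as a regularity fallback).
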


\begin{proof}
{  By Theorem \ref{D1} there exists $u \in C^2(\Omega) \cap C^1(\bar \Omega)$ solution to \eqref{delta}. Let $h= <x,Du> -2u$. The function $h$ is harmonic in $\Omega$ (observe that it is harmonic in the distributional sense thanks to the differential identity 
$\Delta(<x,Dv>)= <D(\Delta v),x> + 2\Delta v$ and by classical regularity results $h$ is smooth in $\Omega$).} { (ii) together with $Du = c \nu$ on  $\partial \Omega$} give
\[
\int_\Omega <x,Du>\> dx -2 \int_\Omega u \> dx  = -c^2\oint_{\partial\Omega} <x, \nu_x> =-c^2n |\Omega|.
\]
{  In view of the divergence theorem} 
\begin{equation}
\label{equality33}
c^2n |\Omega| -(n+2) \int_\Omega u \> dx  =0.
\end{equation}
Let $P= |Du|^2 +\frac{2}{n}u$, since
\[
 \int_\Omega u (\Delta P) \> dx = -\int_\Omega P\> dx -c\oint_{\partial\Omega }P,
\]
{  the fact that $u$ is a solution to \eqref{delta}  together with  \eqref{equality33} yields}
\[
\int_\Omega u (\Delta P) \> dx = -\left(\frac{n+2}{n}\right) \int_\Omega u \> dx  -c^3 |\partial \Omega|=  - \left(\frac{n+2}{n}\right) \int_\Omega u \> dx + c^2|\Omega|=0.
\]
Here we have used that $c = -\dfrac {|\Omega|}{|\partial\Omega|}$.

\noindent
The strong maximum principle {  leads} $u>0$ in $\Omega$, then \eqref{pfunction} implies that $\Delta P=0$ in $\Omega$ and the proof concludes as in Subsection \ref{subswein}.
\end{proof}

\subsubsection{Remark on the proof and generalization.}
To our knowledge the proof by duality theorem is the first one which does not make explicit use of maximum principle. We face a flavor of the maximum principle when $u$ is assumed {  to have constant sign}. Nevertheless the proof is reminiscent of Weinberger's one and indeed both share the same regularity of $u$. Finally there is also an interesting generalization due to Bennett \cite{Be} where fourth order overdetermined problem for biharmonic operator are taken into account.

\subsection{The domain derivative \cite{ChoHen}}

As we have seen in the first section, there is a deep connection between overdetermination and shape optimization. Throughout this section if $\Omega$ is an open subset of $\R^n$ satisfying the assumptions of Theorem \ref{S1} such that there exists a solution to problem \eqref{delta}, we say that $\Omega$ is a solution to Serrin's problem. Following \cite{ChoHen} we are going to show how to construct a shape functional which is minimized by solutions to Serrin's Problem and then infer the uniqueness of the minimizer.

For every $\omega \subset \R^n$ with $C^2$ boundary we denote by $u_\omega$ the solution to

\begin{equation}\label{deltino}
\left\{
\begin{array}{ll}
\Delta u_\omega=-1 & \mathrm{in}\,\,\omega \\
u_\omega \in H_0^1(\omega) \\
%\displaystyle\frac{\partial u}{\partial \nu_x}=c &  \mathrm{on}\,\,\partial \Omega.
\end{array}
\right. 
\end{equation}
and we consider the functional 
\[
J(\omega)= n \oint_{\partial \omega} |Du_\omega|^3 -(n+2) \int_\omega |Du_\omega|^2 \> dx.
\]

The strategy of the proof consists in proving that every solution to Serrin's problem minimizes the functional $J$. 
Indeed the following lemma holds true

\begin{lemma}
\label{funct}
$J(\omega) \ge 0$ for every $\omega \subset \R^n$ with $C^2$ boundary. If $\omega$ is a solution to Serrin's problem then $J(\omega)=0$.
\end{lemma}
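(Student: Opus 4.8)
The plan is to establish the inequality $J(\omega)\ge 0$ for an arbitrary $C^2$ domain $\omega$ by applying the Pohozaev identity \eqref{pohozaev} to $u_\omega$ together with the pointwise Cauchy–Schwarz inequality \eqref{schwarz}, exactly as in Weinberger's argument but now keeping track of the defect instead of arguing by contradiction. First I would specialize \eqref{pohozaev} to $g\equiv -1$, $G(u)=u$, which, after using $\int_\omega|Du_\omega|^2\,dx=\int_\omega u_\omega\,dx$ (integration by parts, valid since $u_\omega\in H^1_0(\omega)$) and $\oint_{\partial\omega}\langle x,\nu_x\rangle=n|\omega|$, yields the identity
\[
(n+2)\int_\omega u_\omega\,dx \;=\; \oint_{\partial\omega}\langle x,\nu_x\rangle\,|Du_\omega|^2.
\]
On $\partial\omega$ one has $Du_\omega=-|Du_\omega|\,\nu_x$ (the boundary is a level set and $u_\omega>0$ inside by the maximum principle), so $\langle x,\nu_x\rangle|Du_\omega|^2=-\langle x,Du_\omega\rangle|Du_\omega|$. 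This alone does not immediately produce $J$, so the substantive point is to compare $\oint_{\partial\omega}\langle x,\nu_x\rangle|Du_\omega|^2$ with $\oint_{\partial\omega}|Du_\omega|^3$.

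The key step — and the one I expect to be the main obstacle — is the boundary estimate
\[
\oint_{\partial\omega}\langle x,\nu_x\rangle\,|Du_\omega|^2 \;\le\; \oint_{\partial\omega}|Du_\omega|^3 ,
\]
equivalently, after writing $|Du_\omega|=-\partial u_\omega/\partial\nu$ on the boundary, a bound of the form $\oint_{\partial\omega}\langle x,\nu_x\rangle|Du_\omega|^2\le \oint_{\partial\omega}|Du_\omega|^2\cdot(-\partial u_\omega/\partial\nu)$. The natural route is to reduce the left-hand side to a volume integral of a pointwise inequality: using the divergence theorem on the vector field $|Du_\omega|^2 x$ one gets $\oint_{\partial\omega}\langle x,\nu_x\rangle|Du_\omega|^2=\int_\omega\bigl(n|Du_\omega|^2+\langle x,D|Du_\omega|^2\rangle\bigr)dx=\int_\omega\bigl(n|Du_\omega|^2+2\langle x,D^2u_\omega\,Du_\omega\rangle\bigr)dx$, and then integrate by parts again to move the $x$-derivative, invoking $\Delta u_\omega=-1$ and the Cauchy–Schwarz bound $(\Delta u_\omega)^2\le n|D^2u_\omega|^2$ to control the Hessian term. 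Here one must be careful that the boundary terms produced match $\oint_{\partial\omega}|Du_\omega|^3$; this is precisely where the factors $n$ and $n+2$ in the definition of $J$ are engineered to make everything cancel. I would carry this computation out cleanly by combining the Pohozaev identity with the $P$-function $P=|Du_\omega|^2+\frac{2}{n}u_\omega$: multiplying \eqref{pfunction} by a suitable nonnegative weight and integrating, or testing $\Delta P\ge0$ against $u_\omega$ as in the proof of Theorem \ref{D2}, should express $J(\omega)$ as a nonnegative combination of $\int_\omega u_\omega|D^2u_\omega-\frac1n I|^2\,dx\ge0$ and a nonnegative boundary remainder.

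Finally, the equality case is immediate: if $\omega$ solves Serrin's problem then, by Theorem \ref{S1}, $u_\omega=(R^2-|x|^2)/(2n)$ up to translation and $\omega=B_R$, so $|Du_\omega|=|x|/n$, which is constant equal to $R/n$ on $\partial B_R$; plugging in gives $\oint_{\partial B_R}|Du_\omega|^3=(R/n)^3|\partial B_R|=(R/n)^3 n\omega_n R^{n-1}$ and $\int_{B_R}|Du_\omega|^2\,dx=\int_{B_R}|x|^2/n^2\,dx$, and a direct check of these two elementary integrals shows $n\oint_{\partial B_R}|Du_\omega|^3=(n+2)\int_{B_R}|Du_\omega|^2\,dx$, i.e. $J(B_R)=0$. (Alternatively, once the chain of inequalities above is set up, equality forces $D^2u_\omega=\frac1n I$, which already gives the ball.) The delicate part of the whole lemma is thus the rearrangement of the boundary term into a manifestly nonnegative bulk integral plus a controlled boundary contribution; the rest is bookkeeping with the divergence theorem.
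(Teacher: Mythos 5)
Your second suggested route --- testing $\Delta P\ge 0$ against $u_\omega$ --- is exactly the paper's proof, and had you carried it out it would close the argument with no need for the Pohozaev identity at all: Green's identity gives $\int_\omega u_\omega\,\Delta(|Du_\omega|^2)\,dx=\oint_{\partial\omega}|Du_\omega|^3+\int_\omega|Du_\omega|^2\,\Delta u_\omega\,dx$ (using $u_\omega=0$ and $\partial u_\omega/\partial\nu_x=-|Du_\omega|$ on $\partial\omega$), and combining this with the inequality $1\le\tfrac n2\Delta(|Du_\omega|^2)$ multiplied by $u_\omega\ge0$, with $\Delta u_\omega=-1$, and with $\int_\omega u_\omega\,dx=\int_\omega|Du_\omega|^2\,dx$, yields $0\le\tfrac n2\oint_{\partial\omega}|Du_\omega|^3-\tfrac{n+2}2\int_\omega|Du_\omega|^2\,dx=\tfrac12 J(\omega)$. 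However, the route you present as primary contains a false step: the ``key boundary estimate'' $\oint_{\partial\omega}\langle x,\nu_x\rangle|Du_\omega|^2\le\oint_{\partial\omega}|Du_\omega|^3$ fails already for the ball $B_R$ centered at the origin, where the left-hand side equals $n$ times the right-hand side. Even with the missing factor $n$ restored, this is not a boundary estimate that can be proved separately: given the Pohozaev identity it is literally equivalent to $J(\omega)\ge0$, so the reduction gains nothing, and the subsequent integrations by parts you sketch are not carried out.

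The equality case is also handled incorrectly. You invoke Theorem \ref{S1} to conclude that a solution of Serrin's problem is a ball and then verify $J(B_R)=0$; but this lemma is a step in the domain-derivative proof of Theorem \ref{S1}, so appealing to that theorem here is circular. The intended argument is direct and requires no knowledge of the shape of $\omega$: if $\omega$ solves Serrin's problem then $|Du_\omega|\equiv -c$ on $\partial\omega$, so $n\oint_{\partial\omega}|Du_\omega|^3=-nc^3|\partial\omega|=nc^2|\omega|$ because $c=-|\omega|/|\partial\omega|$, while \eqref{mean} together with $\int_\omega u_\omega\,dx=\int_\omega|Du_\omega|^2\,dx$ gives $(n+2)\int_\omega|Du_\omega|^2\,dx=nc^2|\omega|$; subtracting the two gives $J(\omega)=0$.
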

\begin{proof}

Multiplying  \eqref{schwarz} by $u_\omega$ ($u_\omega>0$ in $\omega$) and recalling that
$$\Delta(|Du_\omega|^2) = 2 \sum_{i,j=1}^{i=n} \left(\frac{\partial^2 u_\omega}{\partial x_i \partial x_j}\right)^2 $$ we have

\begin{equation}
\label{integral1}
\int_\omega u_\omega \>dx \leq \frac{n}{2} \int_\omega u_{\omega}\Delta(|Du_\omega|^2)  \> dx.
\end{equation}
{ The divergence} theorem, the fact that $u_\omega=0$ and the fact that $\frac{\partial u_\omega}{\partial \nu_x}= -|Du_\omega|$ on $\partial \omega$ {  bring}

\begin{equation}
\label{integral2}
\int_\omega u_\omega \>dx \leq \frac{n}{2} \left[\oint_{\partial\omega} |Du_{\omega}|^3 + \int_\omega |Du_{\omega}|^2 (\Delta u_\omega)\> dx \right].
\end{equation}
{  Equation} in \eqref{deltino}, {  together with}
\begin{equation}
\label{aster}
\int_\omega u_\omega \>dx= \int_\omega |Du_{\omega}|^2 \> dx 
\end{equation}
{  carry}
\begin{equation}
\label{integrale3}
0 \leq \frac{n}{2} \oint_{\partial\omega} |Du_{\omega}|^3 - \left(\frac{n}{2}+1\right)\int_\omega |Du_{\omega}|^2 \> dx,
\end{equation}
{  which proves the inequality}.

Now assume that $\omega$ is a solution to Serrin's problem, then $c= \frac{\partial u_\omega}{\partial \nu_x}= -|Du_\omega|$ on $\partial \omega$.

From \eqref{mean} and \eqref{aster} we have that
\[
(n+2) \int_\omega |Du_{\omega}|^2 \> dx = nc^2 |\omega|,
\]
{ therefore}
\[
J(\omega) = -nc^3 |\partial \omega| -nc^2|\omega|.
\]
{  Since} $c =- \frac{|\omega|}{|\partial \omega|},$ the thesis follows.
\end{proof}

We briefly recall the definition of shape derivative and Hadamard formula (we refer for instance to \cite{had, HenPi}){ .}

Let $\omega$ be a smooth open set in $\R^n$, and let $\theta \in C^2(\R^n;\R^n)$  and denote by $\omega_t= \{ x+t \theta(x), x \in \omega  \}, \>\> t>0$. The derivative of $J$ at $\omega$ in the direction $\theta$ is 

\begin{equation}
\label{shapederiv}
dJ(\omega, \theta) = \lim_{t \rightarrow 0^+} \frac{J(\omega_t)-J(\omega)}{t}.
\end{equation}
The computation of \eqref{shapederiv} leads to calculate also the derivative of $u_\omega$ with respect to the domain. Such a derivative denoted by $u_\omega'$ satisfies
\begin{equation}\label{deltino2}
\left\{
\begin{array}{ll}
\Delta u'_\omega=0 & \mathrm{in}\,\,\omega \\
u'_\omega = -\displaystyle\frac{\partial u}{\partial \nu_x} <\theta,\nu_x> &  \mathrm{on}\,\,\partial \omega.
\end{array}
\right. 
\end{equation}

\begin{lemma}
\label{sd}
The derivative of the functional $J$ at $\omega$ in the direction $\theta$ is given by
\begin{equation}
\label{derivJ}
dJ(\omega,\theta) = \oint_{\partial\omega} \left (\left[(2n-2)|Du_\omega|^2 -2n(n-1)H|Du_\omega|^3    \right]   <\theta,\nu_x> -3n |Du_\omega|^2  \displaystyle\frac{\partial u'_\omega}{\partial \nu_x}\right)
\end{equation}
where $H$ is the mean curvature of $\partial \omega$ and $u'_\omega$ is defined in \eqref{deltino2}.
\end{lemma}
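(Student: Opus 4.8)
The plan is to differentiate the two pieces of $J$ separately, writing $J(\omega)=n\,A(\omega)-(n+2)\,B(\omega)$ with $A(\omega)=\oint_{\partial\omega}|Du_\omega|^3$ and $B(\omega)=\int_\omega|Du_\omega|^2\,dx$, and to use the two classical Hadamard-type differentiation formulas: for a volume density, $\frac{d}{dt}\big|_{t=0}\int_{\omega_t}f_t\,dx=\int_\omega f'\,dx+\oint_{\partial\omega}f\,<\theta,\nu_x>$, and for a boundary density, $\frac{d}{dt}\big|_{t=0}\oint_{\partial\omega_t}g_t=\oint_{\partial\omega}\big(g'+\big(\frac{\partial g}{\partial\nu_x}+(n-1)H g\big)<\theta,\nu_x>\big)$, where $f'$, $g'$ denote the shape (Eulerian) derivatives at a fixed point and $H$ is the mean curvature of $\partial\omega$. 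Here $f=|Du_\omega|^2$ and $g=|Du_\omega|^3$, so $f'=2<Du_\omega,Du'_\omega>$ and $g'=3|Du_\omega|<Du_\omega,Du'_\omega>$, with $u'_\omega$ the domain derivative solving \eqref{deltino2}; the validity of these formulas rests on the shape differentiability of $\omega\mapsto u_\omega$, which holds under the standing $C^2$ hypothesis.

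For $B$, the interior contribution is $\int_\omega 2<Du_\omega,Du'_\omega>\,dx$, and Green's identity turns this into $-2\int_\omega u_\omega\,\Delta u'_\omega\,dx+2\oint_{\partial\omega}u_\omega\frac{\partial u'_\omega}{\partial\nu_x}$, which vanishes since $\Delta u'_\omega=0$ in $\omega$ and $u_\omega=0$ on $\partial\omega$. Hence $\frac{d}{dt}\big|_{t=0}B(\omega_t)=\oint_{\partial\omega}|Du_\omega|^2<\theta,\nu_x>$.

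For $A$, I would first record that $u_\omega=0$ on $\partial\omega$ together with $u_\omega>0$ inside forces $Du_\omega=-|Du_\omega|\nu_x$ on $\partial\omega$, so that $g'=-3|Du_\omega|^2\frac{\partial u'_\omega}{\partial\nu_x}$ there. The remaining, and genuinely delicate, ingredient is the transport coefficient $\frac{\partial}{\partial\nu_x}|Du_\omega|^3+(n-1)H|Du_\omega|^3$. Splitting the Laplacian at a boundary point into tangential, mixed and purely normal parts and using that the tangential Hessian of $u_\omega$ vanishes (because $u_\omega\equiv0$ on $\partial\omega$) gives $-1=\Delta u_\omega=\frac{\partial^2u_\omega}{\partial\nu_x^2}+(n-1)H\frac{\partial u_\omega}{\partial\nu_x}=\frac{\partial^2u_\omega}{\partial\nu_x^2}-(n-1)H|Du_\omega|$, whence $\frac{\partial^2u_\omega}{\partial\nu_x^2}=(n-1)H|Du_\omega|-1$. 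Since $\frac{\partial}{\partial\nu_x}|Du_\omega|^2=2<(D^2u_\omega)\nu_x,Du_\omega>=-2|Du_\omega|\frac{\partial^2u_\omega}{\partial\nu_x^2}$, one obtains $\frac{\partial}{\partial\nu_x}|Du_\omega|^3=\frac{3}{2}|Du_\omega|\frac{\partial}{\partial\nu_x}|Du_\omega|^2=3|Du_\omega|^2-3(n-1)H|Du_\omega|^3$, and adding $(n-1)H|Du_\omega|^3$ the transport coefficient becomes $3|Du_\omega|^2-2(n-1)H|Du_\omega|^3$.

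Assembling the pieces, $dJ(\omega,\theta)=n\oint_{\partial\omega}\big[-3|Du_\omega|^2\frac{\partial u'_\omega}{\partial\nu_x}+\big(3|Du_\omega|^2-2(n-1)H|Du_\omega|^3\big)<\theta,\nu_x>\big]-(n+2)\oint_{\partial\omega}|Du_\omega|^2<\theta,\nu_x>$; collecting the coefficient of $|Du_\omega|^2<\theta,\nu_x>$, which is $3n-(n+2)=2n-2$, yields precisely \eqref{derivJ}. The main obstacle is exactly the evaluation of the normal transport coefficient: it hinges on the curvature decomposition of $\Delta u_\omega$ restricted to $\partial\omega$ and on careful sign bookkeeping, both in the relation $Du_\omega=-|Du_\omega|\nu_x$ and in the orientation convention for $H$; everything else reduces to the divergence theorem and elementary algebra.
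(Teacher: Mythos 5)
Your proof is correct and follows essentially the same route as the paper: the same two Hadamard formulas for the volume and boundary terms, the same cancellation of $\int_\omega <Du_\omega,Du'_\omega>\,dx$ by Green's identity using $\Delta u'_\omega=0$ and $u_\omega=0$ on $\partial\omega$, and the same use of $Du_\omega=-|Du_\omega|\nu_x$ to rewrite the Eulerian derivative of $|Du_\omega|^3$ on the boundary. The only (minor) difference is how the normal transport coefficient is evaluated: you split $\Delta u_\omega$ at the boundary into its normal and curvature parts, while the paper uses $(n-1)H=-\mathrm{div}\left(Du_\omega/|Du_\omega|\right)$ together with $\Delta u_\omega=-1$; both yield $\frac{\partial}{\partial\nu_x}|Du_\omega|^3=3|Du_\omega|^2-3(n-1)H|Du_\omega|^3$ (the paper's displayed \eqref{der4} drops a factor $|Du_\omega|^2$, a typo which your computation implicitly corrects) and hence the same final formula \eqref{derivJ}.
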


\begin{proof}
The proof follows {  from} Hadamard formula (see  \cite{had,HenPi}).
{  For the function $j_1(\omega)= \displaystyle\int_\omega f(\omega) \>dx$ such a formula reads
\[
dj_1(\omega,\theta)= \int_\omega f'(\omega)\> dx + \oint_{\partial \omega} f(\omega) <\theta,\nu_x> ,
\]
while for $j_2(\omega) = \displaystyle\oint_{\partial \omega} g(\omega) $
we have
\[
dj_2(\omega,\theta)= \oint_{\partial \omega} g'(\omega)\> + \oint_{\partial \omega} (n-1)H \,g(\omega) <\theta,\nu_x> + \oint_{\partial \omega} \frac{\partial g(\omega)}{\partial \nu_x} \, <\theta,\nu_x>.
\]
Here $f'(\omega)$ and $g'(\omega)$ denote the derivatives with respect to the domain of $f$ and $g$, respectively.} 
From these two formulae applied to $J$ we get

\begin{equation}\label{der1}
\begin{array}{ll}
dJ(\omega,\theta)= & 3n  \displaystyle\oint_{\partial \omega}|Du_\omega| <Du_\omega,Du'_\omega> + n(n-1) \oint_{\partial \omega} |Du_\omega|^3  H <\theta,\nu_x>  \\ \\
&+ n \displaystyle\oint_{\partial \omega} \frac{\partial |Du_\omega|^3}{\partial \nu_x} \, <\theta,\nu_x>      -2(n+2) \int_\omega <Du_\omega,Du'_\omega>  \>dx \\\\
& -(n+2)  \displaystyle\oint_{\partial \omega} |Du_\omega|^2 \, <\theta,\nu_x>  .
\end{array}
\end{equation}
By {  the divergence theorem, Problem \eqref{deltino}, and Problem \eqref{deltino2}, we get}

\begin{equation}
\label{der2}
\int_\omega <Du_\omega,Du'_\omega>  \>dx = \displaystyle\oint_{\partial \omega} u_\omega \frac{\partial u'_\omega}{\partial \nu_x}-\int_\omega  u_\omega \Delta u'_\omega \>dx =0.
\end{equation}
On the other hand
\begin{equation}
\label{der3}
\displaystyle\oint_{\partial \omega} |Du_\omega| <Du_\omega,Du'_\omega>= -\displaystyle\oint_{\partial \omega} |Du_\omega|^2 \frac{\partial u'_\omega}{\partial \nu_x}.
\end{equation}
Finally 
\[
\frac{\partial |Du_\omega|^3}{\partial \nu_x}= 3 |Du_\omega|^2 <D(|D u_\omega|), \nu_x>= -3 |Du_\omega| <D(|D u_\omega|), D u_\omega>.
\]
{  Bearing in mind} that for $\partial \omega =\{ x: u_\omega(x)=0\}$ it holds 
\[
(n-1) H = -{\rm div}\left(\displaystyle\frac{Du_\omega}{|Du_\omega|}\right)= \frac{1}{|Du_\omega|}+ \frac{1}{|Du_\omega|^2}<D(|D u_\omega|), D u_\omega>,
\]
\noindent we have
\begin{equation}
\label{der4}
\frac{\partial |Du_\omega|^3}{\partial \nu_x}= -3 |Du_\omega|  \left( (n-1) H - \frac{1}{|Du_\omega|} \right).
\end{equation}
Plugging \eqref{der2}, \eqref{der3}, \eqref{der4} {  into} \eqref{der1} we obtain \eqref{derivJ}.
\end{proof}

\begin{proof}[Proof of Theorem  \ref{S1}]
Let $\Omega$ be a solution to Serrin problem. By lemma \ref{funct}, $\Omega$ is a minimizer of $J$, then for every vector field $\theta \in C^2(\R^n;\R^n)$ we must have 
$$
dJ(\Omega,\theta)=0.
$$ 
Using \eqref{derivJ} together with $c= \dfrac{\partial u_\Omega}{\partial \nu_x}= -|Du_\Omega|$ on $\partial \Omega$ yields

\[
dJ(\Omega,\theta)=2c^2(n-1) \oint_{\partial\Omega} [ 1+nHc] <\theta,\nu_x> - 3nc^2 \oint_{\partial\Omega} \displaystyle\frac{\partial u'_\omega}{\partial \nu_x}.
\]
{ Equation \eqref{deltino2} gives} $$\oint_{\partial\Omega} \displaystyle\frac{\partial u'_\omega}{\partial \nu_x}=0,$$
and then

\[
dJ(\Omega,\theta)=2c^2(n-1) \oint_{\partial\Omega} [ 1+nHc] <\theta,\nu_x> =0, \quad \forall \>\> \theta \in C^2(\R^n;\R^n).
\]
Hence the mean curvature of $\partial \Omega$ is constant and Alexandrov theorem (see \cite{A1,A2}) implies that $\Omega$ is a ball. This concludes the proof.
\end{proof}

\subsubsection{Remark on the proof and { generalizations}.}
The proof via shape derivative is another nice example of proof which does not uses the maximum principle explicitly. Again however the constant sign of the solution $u$ is used. It is also interesting to notice that it uses Alexandrov theorem \cite{A1,A2} which in turn, at least in the original version, relies on the moving planes. Recently even a deeper connection between Alexandrov Theorem and Serrin problem has been exploited in \cite{CiMa, MP} on the wake of \cite{Ros}.
%Although nowadays there are several different proofs of Alexandrov theorem, one should be careful since some of them, in turn, rely on symmetry of solutions to PDE overdetermined problems. 

The shape derivative technique requires (following \cite{ChoHen}) somewhat more regularity than Weinberger's ones. It has been however successfully applied in other contexts for instance to obtain partial result toward the solution to the Schiffer conjecture (see \cite{ChaHen}).

\subsection{An integral approach via arithmetic-geometric mean inequality \cite{BNST}}
This idea stems from the need to extend Serrin overdetermined result to non uniformly elliptic operators of Hessian type. It is a fairly simple proof once we get acquainted with the notation used. We denote by $A=(a_{ij})$ a matrix in the space $\Sn$ of the real symmetric $n\times n$ matrices, and by
$\lambda_1,...,\lambda_n$ its eigenvalues, we define  the first and the second elementary symmetric function of  its eigenvalues as 
$$S_1(A) = Tr(A), \quad S_2(A)=S_2(\lambda_1,...,\lambda_n)=\sum_{1\leq i_1
<i_2\leq n } \lambda_{i_1} \lambda_{i_2}.$$ 
Note that $S_2(A)$ is just the sum of all $2 \times 2$ principal minors of $A$, and in dimension $2$ is nothing but
${\rm Det} A$.

Denoting by
$$
S_2^{ij}(A)
= \frac{\partial }{\partial a_{ij}}S_2(A),$$
Euler identity for homogeneous functions gives
$$
S_2(A) = \frac{1}{2} S_2^{ij}(A) a_{ij},
$$ 
here  we are adopting the Einstein summation convention for repeated indices.

Then the following inequality, known as Newton inequality, holds true in the class of  matrices whose trace is nonnegative

\begin{equation}\label{rel3}
(S_1(A))^2 \ge \frac{2n}{n-1} S_2 (A);
\end{equation}
equality  in \eqref{rel3} implies $\lambda_1= \lambda_2=...=\lambda_n $ (see \cite{HLP}).

Given a $C^2$ function $u$, the $k$-Hessian operators $S_k\left(D^2u\right)$ ($k=1,2$)
are defined as the $k$-th elementary symmetric function of $D^2u$.
Observe that with this notation  $$S_1(D^2u)=\Delta u.$$

\noindent A direct computation yields that $(S_2^{1j}(D^2u),\dots,S_2^{nj}(D^2u))$ is divergence free, i.e.
\begin{equation}
\label{div0}
\frac{\partial}{\partial x_i}S_2^{ij}=0;
\end{equation}
hence $S_2(D^2 u)$ can be written in the following divergence form
\begin{equation}\label{divk}
S_2(D^2 u) = \frac{1}{2} S_2^{ij}(D^2 u) u_{ij} =
 \frac{1}{2} (S_2^{ij}(D^2 u) u_j)_i,
\end{equation}
(from now on subscripts stand for partial differentiations).

Let $t$ be a regular value of $u$ and let $L=\{u \ge t\}$. 
If, with an abuse of notation, we denote by $H= -{\rm div}\left(\displaystyle\frac{Du}{|Du|}\right)$,  $(n-1)$ times the curvature  of the level set $\partial L$ at the point $x$, then 
$$-\Delta u=H |Du|-\frac{u_{ij}u_iu_j}{|Du|^2}.$$
{  This} means that the value of $\Delta u$ at any regular point (i.e. { a point with} non vanishing gradient) only involves derivates of $u$ along the direction of steepest descent and the mean curvature $H/(n-1)$ of the level surface through that point.

Finally the following pointwise identity holds (see \cite{Reilly})
\begin{equation}\label{H-S}
-H=\frac{S_2^{ij}(D^2 u)u_iu_j}{|Du|^{3}}.
\end{equation}

\begin{proof}[Proof of Theorem  \ref{S1}] {  First we observe that}

\begin{eqnarray}
&\displaystyle\int_\Omega|Du|^2 &=\int_\Omega |Du|^2 (-\Delta u)  \notag
\\
&&=2\int_\Omega u_{ij}u_iu_j -\oint_{\partial \Omega}|Du|^2<Du,\nu_x> \notag
\\
&&=2\int_\Omega\left[\Delta u|Du|^2+H|Du|^3\right] + c^3|\partial \Omega|. \notag
\end{eqnarray}
Then, using the equation in \eqref{delta} and the fact that $-c = \frac{|\Omega|}{|\partial \Omega|}$ we have
\begin{equation}\label{H}
\int_\Omega H|Du|^3=\frac{3}{2}\int_\Omega |Du|^2-\frac{c^2 }{2}|\Omega|.
\end{equation}
{ Plugging \eqref{uguale} and \eqref{mean} into \eqref{H} we obtain 

\begin{equation}\label{H_Omega}
 \int_\Omega H|Du|^3= \frac{(n-1)}{ (n+2)}c^2|\Omega|.
\end{equation}
Using that $u >0$ in $\Omega$, equations \eqref{divk}, \eqref{H-S}, \eqref{H_Omega}, and inequality \eqref{rel3} yield}
$$
\frac{(n-1)}{ (n+2)}c^2|\Omega|=\int_\Omega H|Du|^3=2 \int_\Omega u S_2(D^2 u) \le  \frac{n-1}{n}\int_\Omega u=\frac{(n-1)}{ (n+2)}c^2|\Omega|.
$$
This implies that equality holds true in \eqref{rel3} so
\begin{equation}
S_2(D^2 u)=\frac{n-1}{2n} \quad {\rm in } \>\> \Omega, 
\end{equation}
 and the Hessian matrix $D^2 u$ has all equal eigenvalues at every point of $\Omega$. This fact  implies that $D^2 u$ is {  a constant times} the identity matrix and the thesis follows.
\end{proof}

\subsubsection{Remark on the proof and generalization.}
Here is another example where besides the constant sign of the solution $u$ there is no shade of maximum principle. Basically the only ingredient of the proof is the geometric mean inequality. Once again the proof only needs the regularity required by the Poho\v{z}aev inequality i.e.: $u\in C^2(\Omega)\cap C^1(\bar\Omega)$. There is a deep connection between this proof and the Weinberger's proof, since the first one consists somehow in evaluating the integral over $\Omega$ of $\Delta u$ times the $P$-function. However no maximum principle on $P$ is established and everything is kept in integral form. Even if the proof was successfully applied to nonlinear operator of Hessian type, it turned out that the main advantage of this approach is that it does not use any pointwise argument. By means of this technique, stability theorem for Serrin problem like those in \cite{ABR} were improved in \cite{BNST2}. Moreover the technique is well designed when dealing with anisotropic overdetermined problem \cite{CiaSal}, where intrinsic asymmetry and lack of regularity advise against Serrin's and Weinberger's proofs.


\begin{thebibliography}{10}

\bibitem{ABR}
A.~Aftalion, J.~Busca, and W.~Reichel, \emph{Approximate radial symmetry for
  overdetermined boundary value problems}, Adv. Differential Equations
  \textbf{4} (1999), no.~6, 907--932.

\bibitem{A1}
A.D.~Alexandrov, \emph{Uniqueness theorems for surfaces in the large}, V. (Russian) Vestnik Leningrad. Univ . \textbf{13} (1958), 5--8.
  
  \bibitem{A2}
A.D.~Alexandrov, \emph{A characteristic property of the spheres}, Ann. Mat. Pura e Appl. \textbf{58} (1962), 303--354.



%\bibitem{BN}
%H.~Berestycki and L.~Nirenberg, \emph{On the method of moving planes and the
%  sliding method}, Bol. Soc. Brasil. Mat. (N.S.) \textbf{22} (1991), no.~1,
%  1--37. 

\bibitem{Be}
A.~Bennett, \emph{Symmetry in an ovedetermined fourth order elliptic boundary value problem}, SIAM J. Math. Anal. \textbf{17} (1986), 1354--1358.

\bibitem{BNST}
B.~Brandolini, C.~Nitsch, P.~Salani, and C.~Trombetti, \emph{Serrin type
  overdetermined problems: an alternative proof}, Arch. Rat. Mech. Anal. \textbf{190} (2008), 267--280.
  
  \bibitem{BNST2}
B.~Brandolini, C.~Nitsch, P.~Salani, and C.~Trombetti, \emph{On the stability of the Serrin problem}, J. Diff. Equations \textbf{245} no.6 (2008), 1566--1583.
  
  \bibitem{BuKaw}
  G.~Buttazzo and B.~Kawohl, \emph{Overdetermined boundary value problems for the $\infty$-Laplacian}, Int. Math. Res. Not. IMRN \textbf{2} (2011), 237--247.
  
  \bibitem{ChaHen}
T.~Chatelain and A.~Henrot., \emph{Some results about Schiffer's conjecture}, Inverse Problems
  \textbf{15} (1999),  647--658.

 \bibitem{ChoHen}
M.~Choulli and A.~Henrot, \emph{Use of the domain derivative to prove symmetry results in partial differential equations}, Math. Nachr.
  \textbf{192} (1998), 91--103.
  
  \bibitem{CiaSal}
A.~Cianchi  and P.~Salani, \emph{Overdetermined anisotropic elliptic problems}, Math. Ann.
  \textbf{345} (2009), 859--881.
  
   \bibitem{CiMa}
G.~Ciraolo  and F.~Maggi, \emph{On the shape of compact hypersurfaces with almost constant mean curvature}, Comm. Pure Appl. Math.
  \textbf{70} (2017), 665--716.
  
  

\bibitem{CraFra}
G.~Crasta and I.~Fragal\'a, \emph{A symmetry problems for the infinity Laplacian}, Int. Math. Res. Not. IMRN \textbf{18} (2015), 
  8411--8436.



\bibitem{FaKa} 
A.~Farina and B.~Kawohl, \emph{Remarks on an overdetermined boundary value problem}, Calc. Var. Partial Differential Equations \textbf{31} (2008), no. 3, 351--357.

\bibitem{FGK}
I.~Fragal\'a, F.~Gazzola and B.~Kawohl, \emph{Overdetermined problems with possibly degenerate ellipticity, a geometric approach}, Math. Z.
  \textbf{254} (2006), 117--132.

\bibitem{GL}
N.~Garofalo and J.L.~Lewis, \emph{A symmetry result related to some overdetermined boundary value problems}, American J. of Math.
  \textbf{111} (1999),  no.~1, 9--33.
  
  \bibitem{GNN}
B.~Gidas, Wei~Ming Ni, and L.~Nirenberg, \emph{Symmetry and related properties
  via the maximum principle}, Comm. Math. Phys. \textbf{68} (1979), no.~3,
  209--243.


%\bibitem{GT}
%D.~Gilbarg and N.S.~Trudinger, \emph{Elliptic partial differential equations
%  of second order}, Classics in Mathematics, Springer-Verlag, Berlin, 2001,
%  Reprint of the 1998 edition.

\bibitem{had} 
J.~ Hadamard, \emph{M\'emoire sur le probl\'eme d'analyse relatif \`a l'\'equilibre des plaques \'elastiques encastr\'ees}, (1908) oeuvres de J. Hadamard, CNRS, Paris, 1968. 



\bibitem{HLP}
G.H.~Hardy, J.E.~Littlewood, and G.~P{\'o}lya, \emph{Inequalities}, Cambridge
  Mathematical Library, Cambridge University Press, Cambridge, 1988, Reprint of
  the 1952 edition.

%\bibitem{HP}
%A.~Henrot, G.~A. Philippin, and H.~Pr{\'e}bet, \emph{Overdetermined problems on
%  ring shaped domains}, Adv. Math. Sci. Appl. \textbf{9} (1999), no.~2,
%  737--747.

\bibitem{HenPi} 
A.~Henrot and M.~Pierre, \emph{Variation et optimisation de formes. Une analyse g\'eom\'etrique}, Math\'ematiques \& Applications (Berlin), 48. Springer, Berlin, 2005.

\bibitem{MP}
R.~Magnanini  and  G.~Poggesi \emph{On the stability for Alexandrov's soap bubble theorem}, J. Anal Math, to appear.

\bibitem{PaSc}
L.E.~Payne  and  P.W.~Schaefer \emph{Duality theorems in some overdetermined problems}, Math. Methods in the Appl. Sciences,
   \textbf{11} (1989), 805--819.
   
   

\bibitem{Poh65}
S.I.~Poho{\v{z}}aev, \emph{On the eigenfunctions of the equation {$\Delta
  u+\lambda f(u)=0$}}, Dokl. Akad. Nauk SSSR \textbf{165} (1965), 36--39.
  
  \bibitem{PSZ}
G.~P\'{o}lya and G.~Szeg\"{o}, \emph{Isoperimetric Inequalities in
Mathematical Physics}, Ann.\ of Math.\ Stud.\ 27, Princeton
University Press, Princeton, 1951.

%\bibitem{Ramm}
%A.G.~Ramm, \emph{Proof of the Schiffer's conjecture}, http://arxiv.org/abs/1706.03032.


\bibitem{Reilly}
R.C.~Reilly, \emph{On the {H}essian of a function and the curvatures of its
  graph}, Michigan Math. J. \textbf{20} (1973), 373--383.
  

\bibitem{Ros} 
A.~Ros, \emph{Compact hypersurfaces with constant higher order mean curvatures}, Rev.
Mat. Iberoamericana, \textrm{3} (1987), 447--453.

%\bibitem{Rosset}
%E.~Rosset, \emph{An approximate {G}idas-{N}i-{N}irenberg theorem}, Math.
%  Methods Appl. Sci. \textbf{17} (1994), no.~13, 1045--1052.

\bibitem{Serrin}
J.~Serrin, \emph{A symmetry problem in potential theory}, Arch. Rational Mech.
  Anal. \textbf{43} (1971), 304--318.

\bibitem{Struwe}
M.~Struwe, \emph{Variational methods}, third ed., Ergebnisse der Mathematik und
  ihrer Grenzgebiete. 3. Folge. A Series of Modern Surveys in Mathematics
  [Results in Mathematics and Related Areas. 3rd Series. A Series of Modern
  Surveys in Mathematics], vol.~34, Springer-Verlag, Berlin, 2000, Applications
  to nonlinear partial differential equations and Hamiltonian systems.

%\bibitem{Ta}
%G.~Talenti, \emph{Elliptic equations and rearrangements}, Ann. Scuola Norm.
%  Sup. Pisa Cl. Sci. (4) \textbf{3} (1976), no.~4, 697--718. 
%  
%\bibitem{Tr}
%N.~S. Trudinger, \emph{On new isoperimetric inequalities and symmetrization},
%  J. Reine Angew. Math. \textbf{488} (1997), 203--220.


%\bibitem{WaXia}
%G.~Wang and C.~  Xia, \emph{A characterization of the Wulff shape by an overdetermined anisotropic PDE}, Arch.
%  Rational Mech. Anal. \textbf{199} (2011), 99--115.

\bibitem{Wein}
H.F.~Weinberger, \emph{Remark on the preceding paper of {S}errin}, Arch.
  Rational Mech. Anal. \textbf{43} (1971), 319--320.

\end{thebibliography}
\end{document}